\documentclass[12pt]{article}
\usepackage[english]{babel}
\usepackage{latexsym,amssymb,amsmath,amsthm,amsfonts}
\usepackage{graphicx}
\usepackage{url}

\def\'#1{\ifx#1i{\accent"13 \i}\else{\accent"13 #1}\fi}

\textwidth = 6.5 in
\textheight = 9 in
\oddsidemargin = 0.0 in
\evensidemargin = 0.0 in
\topmargin = 0.0 in
\headheight = 0.0 in
\headsep = 0.0 in
\parskip = 0.2in
\parindent = 0.0in

\newtheorem{theorem}{Theorem}[section]

\newtheorem{lemma}[theorem]{Lemma}

\begin{document}
\title{Achromatic numbers of Kneser graphs}

\author{M. G. Araujo-Pardo \footnotemark[1] \\ \url{garaujo@matem.unam.mx}
\and J. C. D{\' i}az-Pati{\~ n}o \footnotemark[1] \\ \url{juancdp@im.unam.mx}
\and C. Rubio-Montiel \footnotemark[2] \\ \url{christian.rubio@acatlan.unam.mx}}

\maketitle

\def\thefootnote{\fnsymbol{footnote}}
\footnotetext[1]{Instituto de Matem{\' a}ticas Unidad Juriquilla, Universidad Nacional Aut{\' o}noma de M{\' e}xico, 04510 Campus Juriquilla, Quer{\' e}taro, M{\' e}xico.}
\footnotetext[2]{Divisi{\' o}n de Matem{\' a}ticas e Ingenier{\' i}a, FES Acatl{\' a}n, Universidad Nacional Aut{\'o}noma de M{\' e}xico, Naucalpan, Mexico.}

\begin{abstract}
Complete vertex colorings have the property that any two color classes have at least an edge between them. Parameters such as the Grundy, achromatic and pseudoachromatic numbers come from complete colorings, with some additional requirement. In this paper, we estimate these numbers in the Kneser graph $K(n,k)$ for some values of $n$ and $k$. We give the exact value of the achromatic number of $K(n,2)$.
\end{abstract}
\textbf{Keywords:} Achromatic number, pseudoachromatic number, Grundy number, block designs, geometric type Kneser graphs.

\textbf{2010 Mathematics Subject Classification:} 05C15, 05B05, 05C62.

%%%%%%%%%%%%%%%%%%%%%%%%%%%%%%%%%%%%%%%%%%%%%%%%%%%%%%%%

\section{Introduction}

Since the beginning of the study of colorings in graph theory, many interesting results have appeared in the literature, for instance, the chromatic number of Kneser graphs. Such graphs give an interesting relation between finite sets and graphs.

Let $V$ be the set of all $k$-subsets of $[n]:=\{1,2,\dots,n\}$, where $1\leq k\leq n/2$. The \emph{Kneser graph} $K(n,k)$ is the graph with vertex set $V$ such that two vertices are adjacent if and only if the corresponding subsets are disjoint. Lov{\' a}sz \cite{MR514625} proved that $\chi(K(n,k))=n-2(k-1)$ via the Borsuk-Ulam theorem, see Chapter 38 of \cite{MR3288091}.

Some results on the Kneser graphs and parameters of colorings have appeared since then, for instance \cite{rubio2016achromatic,MR2426534,MR2548791,MR2564784,MR1670155,MR2464882}.

An $l$-\emph{coloring} of a graph $G$ is a surjective function $\varsigma$ that assigns a number from the set $[l]$ to each vertex of $G$. An $l$-coloring of $G$ is \emph{proper} if  any two adjacent vertices have different colors. An $l$-coloring $\varsigma$ is \emph{complete} if for each pair of different colors $i,j\in [l]$ there exists an edge $xy\in E(G)$ such that $\varsigma(x)=i$ and $\varsigma(y)=j$. 

The largest value of $l$ for which $G$ has a complete $l$-coloring is called the \emph{pseudoachromatic number} of $G$ \cite{MR0256930}, denoted $\psi(G)$.  A similar invariant, which additionally requires an $l$-coloring to be proper, is called the \emph{achromatic number} of $G$ and denoted by $\alpha(G)$ \cite{MR0272662}. Note that $\alpha(G)$ is at least $\chi(G)$ since the \emph{chromatic number $\chi(G)$ of $G$} is the smallest number $l$ for which there exists a proper $l$-coloring of $G$ and then such an $l$-coloring is also complete. Therefore, for any graph $G$,  $\chi(G)\leq \alpha(G)\leq\psi(G).$

In this paper, we estimate these parameters arising from complete colorings of Kneser graphs. The paper is organized as follows. In Section \ref{Section2} we recall notions of block designs.

Section \ref{Section3} is devoted to the achromatic $\alpha(K(n,2))$ number of the Kneser graph $K(n,2)$. It is proved that $\alpha(K(n,2))=\left\lfloor \binom{n+1}{2}/3\right\rfloor$ for $n\not=3$.

In Section \ref{Section4} it is shown that $\psi(K(n,2))$ satisfies \[\left\lfloor\tbinom{n}{2}/2\right\rfloor\leq\psi(K(n,2))\leq\left\lfloor (\tbinom{n}{2}+\left\lfloor \frac{n}{2}\right\rfloor)/2\right\rfloor\] for $n\geq 7$ and that the upper bound is tight.

The Section \ref{Section5} establishes that the Grundy number $\Gamma(K(n,2))$ equals $\alpha(K(n,2))$. The \emph{Grundy number} $\Gamma(G)$ of a graph $G$ is determined by the worst-case result of a greedy proper coloring applied on $G$. A \emph{greedy} $l$-coloring technique operates as follows. The vertices (listed in some particular order) are colored according to the algorithm that assigns to a vertex under consideration the smallest available color. Therefore, greedy proper colorings are also complete.

Section \ref{Section6} gives a natural upper bound for the pseudoachromatic number of $K(n,k)$ and a lower bound for the achromatic number of $K(n,k)$ in terms of the $b$-chromatic number of $K(n,k)$, another parameter arising from complete colorings.

Section \ref{Section7} is about the achromatic numbers of some geometric type Kneser graphs. A \emph{complete geometric graph} of $n$ points is an embedding of the complete graph $K_n$ in the Euclidean plane such that its vertex set is a set $V$ of points in general position, and its edges are straight-line segments connecting pairs of points in $V$. We study the achromatic numbers of graphs $D_V(n)$ whose vertex set is the set of edges of a complete geometric graph of $n$ points and adjacency is defined in terms of geometric disjointness.

To end, in Section \ref{Section8}, we discuss the case of the odd graphs $K(2k+1,k)$.

%%%%%%%%%%%%%%%%%%%%%%%%%%%%%%%%%%%%%%%%%%%%%%%%%%%%%%%%

\section{Preliminaries} \label{Section2}

All graphs in this paper are finite and simple. Note that the complement of the line graph of the complete graph on $n$ vertices is the Kneser graph $K(n,2)$. We use this model of the Kneser graph $K(n,2)$ in Sections \ref{Section3}, \ref{Section4}, \ref{Section5} and \ref{Section7}.

Let $n$, $b$, $k$, $r$ and $\lambda$ be positive integers with $n>1$. Let $D=(P,B,I)$ be a triple consisting of a set $P$ of $n$ distinct objects, called points of $D$, a set $B$ of $b$ distinct objects, called blocks of $D$ (with $P\cap B = \emptyset$), and an incidence relation $I$, a subset of $P\times B$. We say that $v$ is incident to $u$ if exactly one of the ordered pairs $(u,v)$ and $(v,u)$ is in $I$; then $v$ is incident to $u$ if and only if $u$ is incident to $v$.  $D$ is called a \emph{$2$-$(n, b, k, r, \lambda)$ block design} (for short, \emph{$2$-$(n, b, k, r, \lambda)$ design}) if it satisfies the following axioms.
\begin{enumerate}
\item Each block of $D$ is incident to exactly $k$ distinct points of $D$.
\item Each point of $D$ is incident to exactly $r$ distinct blocks of D.
\item If $u$ and $v$ are distinct points of $D$, then there are exactly $\lambda$ blocks of $D$ incident to both $u$ and $v$.
\end{enumerate}
A $2$-$(n, b, k, r, \lambda)$ design is called a balanced incomplete block design BIBD; it is called an $(n, k, \lambda)$-design, too, since the parameters of a $2$-$(n, b, k, r, \lambda)$ design are not all independent. The two basic equations connecting them are $nr=bk$ and $r(k-1)=\lambda(n-1)$. For a detailed introduction to block designs we refer to \cite{MR1729456,MR1742365}.

A design is \emph{resolvable} if its blocks can be partitioned into $r$ sets so that $b/r$ blocks of each part are point-disjoint and each part is called a \emph{parallel class}.

A \emph{Steiner triple system} $STS(n)$ is an $(n, 3, 1)$-design. It is well-known that an $STS(n)$ exists if and only if $n\equiv 1,3$ mod $6$. A resolvable $STS(n)$ is called a \emph{Kirkman triple system} and denoted by $KTS(n)$ and exists if and only if $n\equiv 3$ mod $6$, see \cite{MR0314644}.

An $(n, 5, 1)$-design exists if and only if $n\equiv 1,5$ mod $20$, see \cite{MR1742365}.

An $(n, k, 1)$-design can naturally be regarded as an edge partition into $K_k$ subgraphs, of the complete graph $K_{n}$.

Finally, we recall that the concepts of a 1-factor and a 1-factorization represent, for the case of $K_n$, a parallel class and a resolubility of an $(n, 2, 1)$-design, respectively.

%%%%%%%%%%%%%%%%%%%%%%%%%%%%%%%%%%%%%%%%%%%%%%%%%%%%%%%%

\section{The exact value of $\alpha(K(n,2))$}\label{Section3}

In this section, we prove that $\alpha (K(n,2))=\left\lfloor  \binom{n+1}{2}/3\right\rfloor$ for every $n\not =3$. The proof is about the upper bound and the lower bound have the same value.

\begin{theorem} \label{teo1}
The achromatic number $\alpha(K(n,2))$ of $K(n,2)$ equals $\left\lfloor \binom{n+1}{2}/3\right\rfloor$ for $n\not=3$ and $\alpha(K(3,2))=1$.
\end{theorem}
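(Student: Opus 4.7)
Identify $V(K(n,2))$ with $E(K_n)$, so that each color class of a proper coloring is an independent set in $K(n,2)$, i.e., a family of pairwise intersecting $2$-subsets. A standard classification then shows such a family is either a \emph{substar} (subset of the edges through a common vertex of $K_n$, called the \emph{center}) or a \emph{triangle} $\{ab,bc,ca\}$. For $n=3$ the graph $K(3,2)$ has no edges, so $\alpha(K(3,2))=1$ trivially; in what follows assume $n\geq 4$.

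For the upper bound, consider a proper complete $l$-coloring and split its classes into $\sigma$ singletons, $s$ star classes of size $\geq 2$ (each with a unique center), and $t$ triangle classes. Then $l=\sigma+s+t$ and $\sigma+E_S+3t=\binom{n}{2}$, where $E_S:=\sum_{C\in S}|C|\geq 2s$. The completeness condition---a pair of disjoint edges, one per class, between every two color classes---rules out three configurations: (a) two size-$\geq 2$ star classes sharing a center (all their edges would meet there); (b) a singleton $\{\{u,v\}\}$ together with a size-$\geq 2$ star centered at $u$ or at $v$; and (c) two singletons whose edges intersect. Hence the $s$ star centers and the $2\sigma$ endpoints of the singleton edges are $s+2\sigma$ distinct vertices of $[n]$, yielding $2\sigma+s\leq n$. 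Rearranging,
\[
3l=\binom{n}{2}+2\sigma+3s-E_S\leq\binom{n}{2}+2\sigma+s\leq\binom{n+1}{2},
\]
and since $l$ is an integer, $l\leq\lfloor\binom{n+1}{2}/3\rfloor$.

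For the lower bound, construct a coloring matching the bound. The equality analysis above dictates the shape of the target: $\sigma$ singleton classes on a matching with vertex set $V_\Sigma$, $s=n-2\sigma$ pair classes at distinct centers $V_S=[n]\setminus V_\Sigma$ (possibly with one pair promoted to a size-$3$ star when arithmetic demands it), and $t=l-\sigma-s$ triangle classes packed into the remaining edges of $K_n$. The triangle backbone is supplied by the block designs recalled in Section~\ref{Section2}: a Steiner triple system $STS(n)$ when $n\equiv 1,3\pmod 6$, a Kirkman triple system $KTS(n)$ (with its parallel classes) when $n\equiv 3\pmod 6$, and near-triangle decompositions combined with $1$-factorizations of $K_n$ in the other residues $n\equiv 0,2,4,5\pmod 6$. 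The cases $n=4$ and $n=5$ (where $K(5,2)$ is the Petersen graph) are handled directly.

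The upper bound is a clean counting argument resting on the single invariant $2\sigma+s\leq n$. The main obstacle is the lower bound, which requires a residue-by-residue construction and, for each residue, verification of all $\binom{l}{2}$ cross-edge conditions; the delicate bookkeeping is to place the pair and singleton classes so that they avoid the very configurations ruled out by the upper bound (for instance, a singleton containing $v$ coexisting with a pair class centered at $v$), while also fitting cleanly around the chosen triangle backbone.
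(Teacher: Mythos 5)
Your upper bound is correct and is essentially the paper's argument: classifying independent sets of $K(n,2)$ as singletons, stars, or triangles, and using completeness to force the star centers and singleton endpoints to be $2\sigma+s$ distinct vertices of $[n]$ is exactly the paper's inequality $y\leq n-2x$, and your version is in fact slightly more careful in that it explicitly accounts for stars of size greater than two via $E_S\geq 2s$. That half of the proof stands.

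The lower bound, however, is a plan rather than a proof, and it is precisely where the paper does almost all of its work. You correctly read off from the equality analysis what an optimal coloring must look like (singletons on a partial matching, $2$-stars with pairwise distinct centers covering the remaining vertices, triangles elsewhere), and you name the right ingredients (Steiner and Kirkman systems, near-triangle decompositions), but you never exhibit the colorings or verify completeness, and the verification is not routine. Two concrete points where your sketch would need real repair: (i) for $n\equiv 1,3\pmod 6$ you propose an $STS(n)$ as the backbone, but a full triangle decomposition yields only $\binom{n}{2}/3$ classes, which is short of the target $\left\lfloor\binom{n+1}{2}/3\right\rfloor$ by roughly $n/3$; one must instead sacrifice a parallel class (or pass to an $STS$ on $n\pm c$ points) and replace the freed edges by a carefully interlocking family of $2$-stars and singletons --- the paper's $n\equiv 1\pmod 6$ case requires an $STS(n-4)$ plus a four-point gadget and a separate base case $n=7$, and there is no reason to believe an arbitrary arrangement of the $n$ leftover pair classes is complete. (ii) Completeness between the ad hoc small classes (e.g., two $2$-stars $\{ux,xv\}$ and $\{uy,yv\}$ sharing both leaves $u,v$, which do have disjoint representatives, versus configurations that do not) is exactly the ``delicate bookkeeping'' you defer; since the theorem is an exact equality, this bookkeeping is the content of the proof, not an afterthought. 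As written, the proposal establishes only $\alpha(K(n,2))\leq\left\lfloor\binom{n+1}{2}/3\right\rfloor$ together with a credible strategy for the reverse inequality.
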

\begin{proof}
First, we prove the upper bound $\alpha(K(n,2))\leq \left\lfloor  \binom{n+1}{2}/3\right\rfloor$.

Let $\varsigma$ be a proper and complete coloring of $K(n,2)$. Consider the graph $K(n,2)$ as the complement of $L(K_n)$.  Note that vertices corresponding to a color class of $\varsigma$ of size two induce a $P_3$ subgraph, say $abc$, of the complete graph $K_n$ with $V(K_n) = [n]$; then no color class of $\varsigma$ of size one is a pair containing $b$. Therefore, if $\varsigma$ has $x$ color classes of size one (they form a matching in $K_n$ of size $x$) and $y$ color classes of size two, then $y\leq n-2x$,
 \[\alpha(K(n,2))\leq  \frac{\binom{n}{2}-x-2(n-2x)}{3}+x+(n-2x) = \frac{\binom{n}{2}+2x+(n-2x)}{3}  =  \frac{\binom{n}{2}+n}{3}\]
and we get $\alpha(K(n,2))\leq \left\lfloor  \binom{n+1}{2}/3\right\rfloor$. For the case of $n=3$, $K(3,2)$ is an edgeless graph, hence $\alpha(K(3,2))= 1$. 

Next, we exhibit a proper and complete edge coloring of the complement of $L(K_n)$ that uses $\left\lfloor  \binom{n+1}{2}/3\right\rfloor$ colors. We remark that in order to obtain such a tight coloring it suffices to achieve that all color classes are of size at most three, while the number of \emph{exceptional} vertices of $K_n$ (that are involved neither in a color class of size one nor in the role of the ``center'' of a color class of size two) is at most one. We shall refer to this condition as the condition ($C$).

Figure \ref{Fig1} documents the equality for $n\leq 5$.
\begin{figure}[!htbp]
\begin{center}
\includegraphics[scale=0.7]{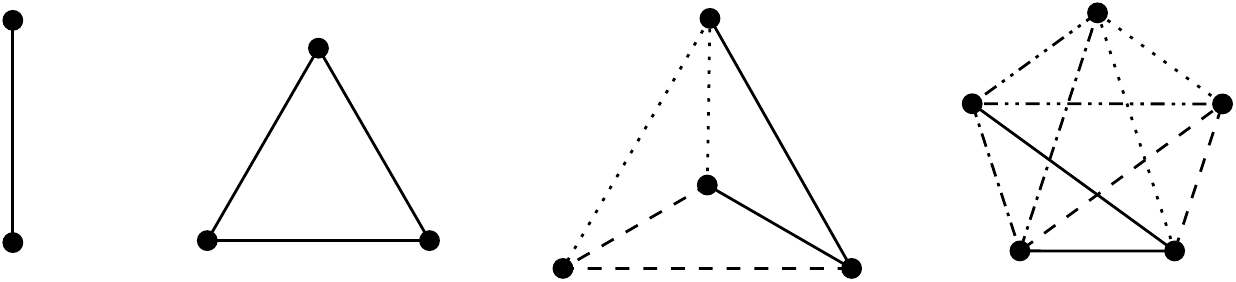}
\caption{\label{Fig1} $\alpha(K(n,2))=\left\lfloor \binom{n+1}{2}/3\right\rfloor$ for $n=2,4,5$ and $\alpha(K(3,2))=1$.}
\end{center}
\end{figure}
For the remainder of this proof, we need to distinguish four cases, namely, when $n=6k,6k+2$; $n=6k+3,6k+5$; $n=6k+4$ and $n=6k+1$ for $k\geq 1$. 
\begin{enumerate}
\item Case $n=6k$ or $n=6k+2$. Since $n+1\equiv 1,3$ mod $6$ there exists an  $STS(n+1)$. We can think of $K(n,2)$ as having the vertex set equal to the set of points of $STS(n+1)$ other than $v$. Then each vertex of $K(n, 2)$ is a subset of exactly one block of $STS(n+1)-v$; the blocks of $STS(n+1)-v$ are ($3$-element) blocks of $STS(n+1)$ not containing $v$, and ($2$-element) blocks $B\setminus \{v\}$, where $B$ is a block of $STS(n+1)$ with $v\in B$. Consider a vertex coloring of $K(n,2)$ that is defined in the following way: Color classes of size three are triangles of $STS(n+1)-v$ (we use this simplified expression to indicate that all vertices of $K(n,2)$, that are subsets of a fixed triangle of $STS(n+1)-v$, receive the same color). All remaining color classes are of size one; they are formed by $2$-element blocks of $STS(n+1)-v$. (They can also be regarded as edges of a perfect matching of the ``underlying'' complete graph on points of $STS(n+1)-v$.) The coloring is obviously proper.  It is complete, too (see Figure \ref{Fig2}), and satisfies the condition ($C$), hence $\alpha(K(n,2))=\left\lfloor \binom{n+1}{2}/3\right\rfloor$.
\begin{figure}[!htbp]
\begin{center}
\includegraphics[scale=0.7]{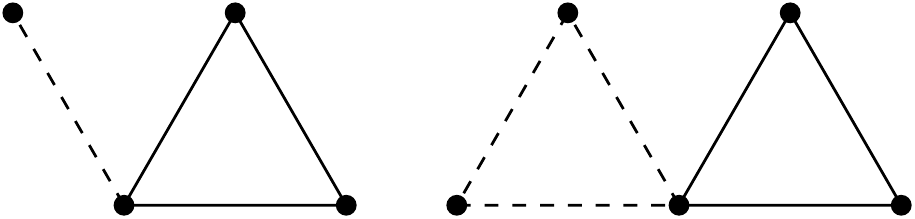}
\caption{\label{Fig2} Every two classes have two disjoint edges in the complement of $L(K_n)$.}
\end{center}
\end{figure}
\item Case $n=6k+3$ or $n=6k+5$.  Add two points $u$,  $v$ to the points of $STS(n-2)$, and color $K(n, 2)$ as follows. Color classes of size three are triangles of $STS(n-2)$ except for one with points $a, b, c$. The remaining color classes are of size two. Five of them correspond to the optimum coloring of $K(5,2)$ depicted in Figure \ref{Fig1} (with points $a,b,c,u,v$).  Finally, every point $x$ of $STS(n-2)$, $x\notin \{a, b, c\}$, gives rise to the color class $\{ux, xv\}$, see Figure \ref{Fig3} (Left). The coloring is proper and complete, see Figure \ref{Fig3} (Right), and it satisfies the condition ($C$).
\begin{figure}[!htbp]
\begin{center}
\includegraphics[scale=0.7]{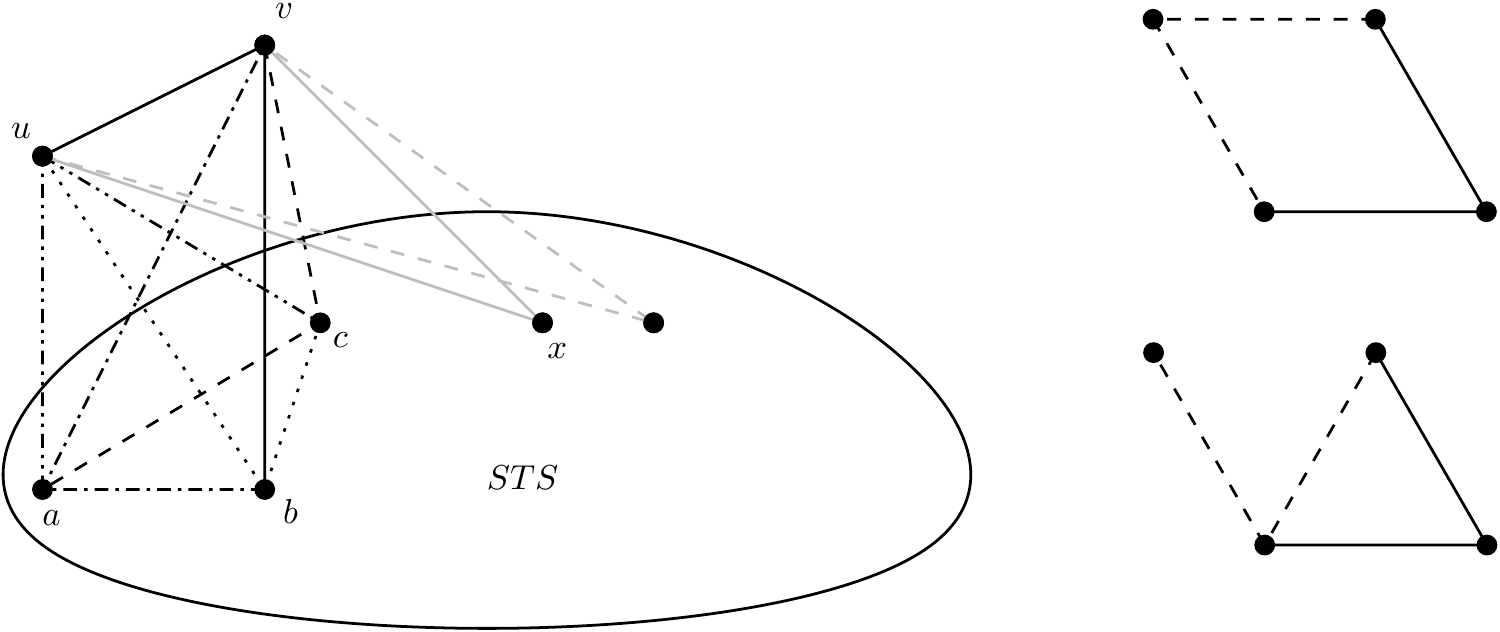}
\caption{\label{Fig3} (Left) Color classes of size two in the proof of Case 2. (Right) Every two distinct color classes of size two contain disjoint edges in the complement of $L(K_n)$.}
\end{center}
\end{figure}
\item Case $n=6k+4$. 
Add a point $v$ to the points of a resolvable $STS(n-1)$, for instance a $KTS(n-1)$. Color classes of size three are triangles of $STS(n-1)$ except for the triangles of a parallel class $P=\{T_i \colon i = 0,1,\dots, \frac{n-4}{3}\}$, where $T_i = \{v_{3i+1},v_{3i+2},v_{3i+3}\}$.  For each triangle $T_i$ of $P$ color vertices of $K(4,2)$ with the vertex set $T_i\cup  \{v\}$ according to the optimum coloring of Figure \ref{Fig1}, see Figure \ref{Fig4}. The resulting coloring is proper and complete, and it fulfills the condition ($C$).  Indeed, the number of color classes of size two is $n-1$; since ``centers'' of those color classes are pairwise disjoint, $v$ is the only exceptional vertex.
\begin{figure}[!htbp]
\begin{center}
\includegraphics[scale=0.7]{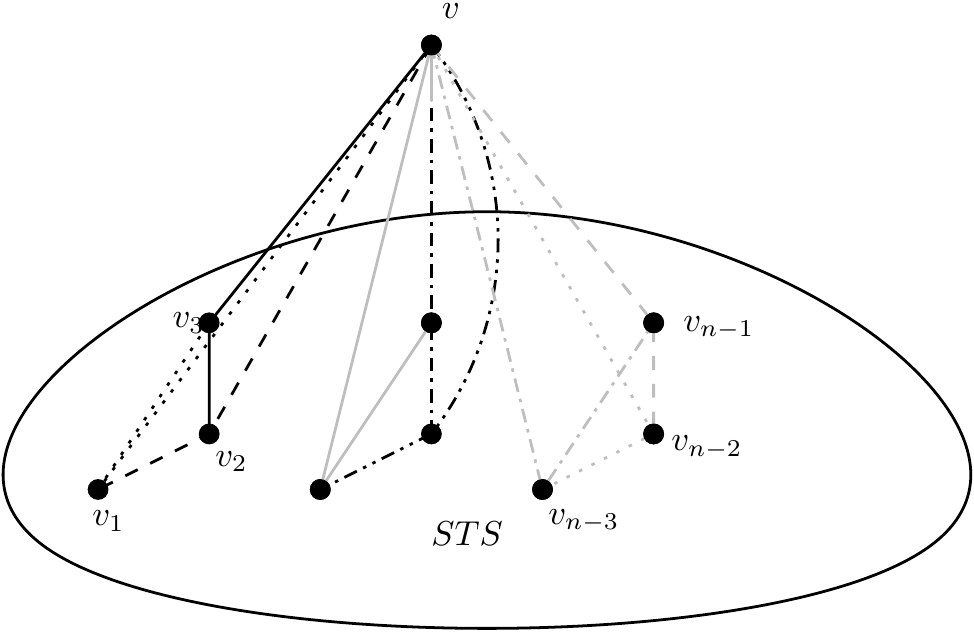}
\caption{\label{Fig4} Color classes of size two in the proof of Case 3.}
\end{center}
\end{figure}
\item Case $n=6k+1$.  First, we analyze the case of $k=1$.  Delete two points of $STS(9)$ presented in Figure \ref{Fig5} (Left) to finish with points $v_1, v_2,\dots , v_7$. The ``survived'' triangles are color classes of size three, see Figure \ref{Fig5} (Center). The remaining six pairs of points are divided into four color classes $\{v_1v_2,v_2v_3\}$, $\{v_3v_4\}$, $\{v_4v_5,v_5v_6\}$ and $\{v_6v_1\}$, see Figure \ref{Fig5} (Right). The obtained coloring shows that $\alpha(K(7,2))=9= \left\lfloor  \binom{7+1}{2}/3\right\rfloor$.
\begin{figure}[!htbp]
\begin{center}
\includegraphics[scale=0.8]{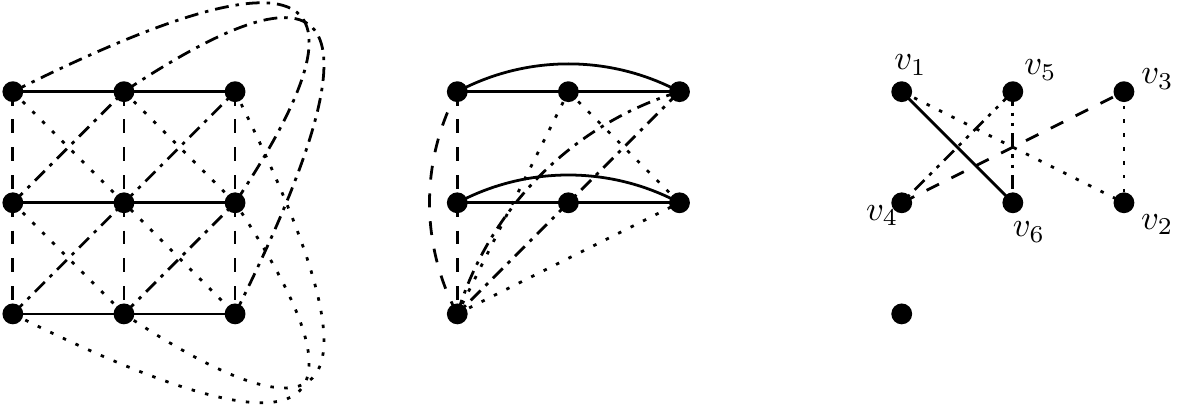}
\caption{\label{Fig5} (Left) $STS(9)$. (Center) The 5 color classes of size three of $K_7$. (Right) Color classes of size one and two in $K(7, 2)$.}
\end{center}
\end{figure}

If $k \geq 2$, consider an $STS(n-4)$ with points $v_1,v_2,\dots ,v_{n-4}$ that has a parallel class $P=\{T_i\colon  i = 0,1,\dots,\frac{n-7}{3}\}$, where $T_i = \{v_{3i+1},v_{3i+2},v_{3i+3}\}$. Add to points of $STS(n-4)$ the points $a, b, c, d$. Every triangle of $STS(n-4)$ except for the triangles of $P$ is a color class of size three. Let $H_i$ denote the join of $T_i$ with the complement of $K_4$ on vertices $a, b, c, d$; the join of two vertex disjoint graphs $G$ and $H$ has the vertex set $V(G)\cup V(H)$ and the edge set $E(G)\cup E(H) \cup \{xy\colon x\in V(G),y\in V(H)\}$. Pairs of points corresponding to edges of $H_i$, $i = 0, 1, \dots , \frac{n-10}{3}$, form color classes of size three determined by point triples $\{v_{3i+1},v_{3i+2},a\}$, $\{v_{3i+2},v_{3i+3},b\}$ and $\{v_{3i+1},v_{3i+3},c\}$, and color classe of size two $\{av_{3i+3}, v_{3i+3}d\}$, $\{bv_{3i+1}, v_{3i+1}d\}$ and $\{cv_{3i+2}, v_{3i+2}d\}$, see Figure \ref{Fig6}. Finally, pairs of points from the set $S = \{v_{n-6}, v_{n-5}, v_{n-4}, a, b, c, d\}$ are colored so that nine color classes are created just as in the coloring of $K(7,2)$ described above for the case $k = 1$. The coloring is proper and complete, and the condition ($C$) is fulfilled, since the number of exceptional vertices in the ``underlying'' $K_n$ is one (exceptional is the vertex of $S$ that is involved only in color classes of size three); so, $\alpha(K(n,2))=\left\lfloor \binom{n+1}{2}/3\right\rfloor$ in this case, too.
\begin{figure}[!htbp]
\begin{center}
\includegraphics[scale=0.8]{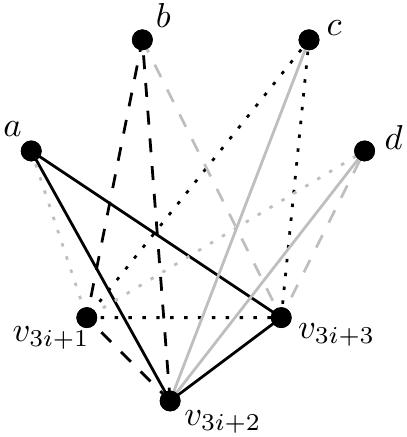}
\caption{\label{Fig6} The 6-coloring of $H_i$.}
\end{center}
\end{figure}
\end{enumerate}
By the four cases, the theorem follows.
\end{proof}

%%%%%%%%%%%%%%%%%%%%%%%%%%%%%%%%%%%%%%%%%%%%%%%%%%%%%%%%

\section{About the value of $\psi(K(n,2))$}\label{Section4}

In this section, we determine bounds for $\psi (K(n,2))$. The gap between the bounds is  $\Theta(n)$, however, the upper bound is tight for an infinite number of values of $n$.

\begin{theorem} \label{teo2}
$\psi(K(n,2))=\alpha(K(n,2))$ for $2\leq n \leq 6$ and 
\[\left\lfloor \frac{\tbinom{n}{2}}{2}\right\rfloor\leq\psi(K(n,2))\leq\left\lfloor \frac{\tbinom{n}{2}+\left\lfloor \frac{n}{2}\right\rfloor}{2}\right\rfloor\]
for $n\geq 7$. Moreover, the upper bound is tight if $n+1\equiv 0,1$ mod $20$.
\end{theorem}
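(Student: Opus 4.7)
Working in the model $K(n,2)=\overline{L(K_n)}$ (vertices are edges of $K_n$, adjacency is edge-disjointness), the small cases $2\le n\le 6$ are handled by direct inspection, since $K(n,2)$ is trivial, $3K_2$, the Petersen graph, or $\overline{L(K_6)}$. For $n\ge 7$, let $\varsigma$ be a pseudocomplete coloring with $l$ classes, and let $x_1$ denote the number of singleton classes. Any two singleton classes must be joined by an edge of $K(n,2)$, so the $x_1$ singleton edges of $K_n$ are pairwise disjoint and therefore form a matching, giving $x_1\le\lfloor n/2\rfloor$. Since every non-singleton class has size at least two and the classes partition the $\binom{n}{2}$ vertices,
\[
l \le x_1 + \frac{\binom{n}{2}-x_1}{2} = \frac{\binom{n}{2}+x_1}{2} \le \frac{\binom{n}{2}+\lfloor n/2\rfloor}{2},
\]
and rounding down yields the upper bound.

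\textbf{Lower bound.} To achieve $\lfloor\binom{n}{2}/2\rfloor$ classes, I would partition the edges of $K_n$ into $2$-matchings (plus one singleton when $\binom{n}{2}$ is odd). Starting from a $1$-factorization (or near $1$-factorization) of $K_n$, pair the $n/2$ edges of each $1$-factor so that no two resulting $2$-matchings are distinct perfect matchings of a common $K_4$. This is the only local obstruction, because two $2$-matchings $\{e_1,e_2\}$ and $\{f_1,f_2\}$ have no disjoint cross pair precisely when their four edges span only four vertices and realize two of the three perfect matchings of a single $K_4$.

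\textbf{Tightness.} For $n+1\equiv 1\pmod{20}$, start from an $(n+1,5,1)$-design on $[n]\cup\{v\}$. Because two blocks share at most one point, the blocks through $v$ yield $K_4$-subgraphs of $K_n$ whose vertex sets partition $[n]$, and the remaining blocks yield $K_5$-subgraphs partitioning the other edges of $K_n$. Inside each $K_5$, take five size-two classes forming a perfect matching of the Petersen graph $K(5,2)$; a direct incidence count shows that every perfect matching of $K(5,2)$ contains exactly one edge from each of the five $K_4$'s of the $K_5$, so no two such classes are perfect matchings of a common $K_4$. Inside each $K_4$-block $B$ on $\{a,b,c,d\}$, take $\{ab\}$ and $\{cd\}$ as singletons, and pair each of the four remaining edges of $B$ with a $K_5$-edge avoiding $\{a,b,c,d\}$; such a compatible assignment exists by Hall's theorem since each $K_4$ admits $(n-4)(n-8)/2$ compatible $K_5$-edges. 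A direct count gives $n^2/4=\lfloor(\binom{n}{2}+\lfloor n/2\rfloor)/2\rfloor$ classes with $\lfloor n/2\rfloor$ singletons forming a matching. The case $n+1\equiv 0\pmod{20}$ proceeds analogously using an $(n+2,5,1)$-design on $[n]\cup\{u,v\}$, where the unique block containing both $u$ and $v$ yields a triangle in $K_n$ contributing one extra singleton.

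\textbf{Main obstacle.} The hardest step is verifying pseudocompleteness of the tightness construction through a case analysis on class types. Two classes from distinct blocks are easy, since two blocks share at most one vertex and the two $2$-matchings then span enough vertices to admit a disjoint cross pair. Two classes inside a single $K_5$ are handled by the Petersen-matching property just described. The most delicate check is a singleton $\{ab\}$ of a $K_4$-block $B$ versus a pair $\{k,m\}$ whose $K_4$-edge $k$ lies in $B$: the edges $ab$ and $k$ must share a vertex (both lie in $V(B)$), but the paired $K_5$-edge $m$ is disjoint from $V(B)$ by construction, supplying the required disjoint cross edge.
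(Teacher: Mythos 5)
Your upper bound and its proof coincide with the paper's (singletons are pairwise adjacent, hence form a matching of $K_n$, hence at most $\lfloor n/2\rfloor$ of them), and your identification of the $K_4$ obstruction for two $2$-matchings is the right key observation. But there are two genuine gaps. First, in the lower bound you assert that the edges of each $1$-factor can be paired ``so that no two resulting $2$-matchings are distinct perfect matchings of a common $K_4$'' without showing that such a pairing exists; the paper makes this step immediate by invoking the known existence of a $1$-factorization of $K_{2t}$ in which no two $1$-factors induce a $4$-cycle component (Jungnickel--Vanstone; Phelps--Stinson--Vanstone), after which \emph{any} pairing inside each factor works, since a bad pair of classes would force a $4$-cycle component in the union of the two factors. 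Your ``near $1$-factorization'' remark also hides real work for $n\not\equiv 0\pmod 4$: the paper needs extra gadget classes of the form $\{ax,xb\}$ through one or two added vertices plus a greedy placement of the edge $ab$, and those classes are paths rather than $2$-matchings, so your ``only local obstruction'' analysis does not cover them.

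Second, the tightness construction is internally inconsistent. You first consume all ten edges of every $K_5$-block in the five within-block classes, and then additionally pair each of the $n$ leftover $K_4$-block edges with a $K_5$-block edge. Counting shows the classes you describe would cover $\binom{n}{2}+n$ vertices of $K(n,2)$ and number $\tfrac{1}{2}\binom{n}{2}+\tfrac{3n}{4}$, exceeding your own upper bound $\tfrac{1}{2}\binom{n}{2}+\tfrac{n}{4}$, so no such coloring exists: the $n$ borrowed $K_5$-edges are being used twice. Repairing this by withdrawing those edges from the $K_5$-blocks destroys the Petersen-perfect-matching structure on which your completeness check relies. The paper sidesteps the problem by leaving the $K_5$-blocks intact and pairing the four leftover edges of each $K_4$-block with leftover edges of \emph{other} $K_4$-blocks (which are vertex-disjoint from it), giving exactly $n/2$ singletons, $n/2$ cross-block pairs, and the within-$K_5$ classes, for the required total. (The small cases $2\le n\le 6$ you leave to ``direct inspection''; the paper gives a short degree argument showing a singleton class forces at most $1+\binom{n-2}{2}$ colors, which you would need to make the $n=4,5,6$ checks honest.)
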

\begin{proof}
For $n=2,3$,  the graph $K(n,2)$ is edgeless and then $\psi(K(n,2))=\alpha(K(n,2))=1$.

For $n=4,5,6$, $\alpha(K(n,2))$ is $3,5,7$, respectively (by Theorem \ref{teo1}). Note that any complete coloring having
a color class of size one uses at most $k=2,4,7$ colors, respectively. And any complete coloring without color classes of size one uses at most $k=3,5,7$ colors, respectively. Therefore, $\psi(K(n,2))$ is at most $3,5,7$, respectively. Hence $\psi(K(n,2))=\alpha(K(n,2))$.

For $n\geq 7$, any complete coloring of $K(n,2)$ has at most $\omega(K(n,2))=\left\lfloor \frac{n}{2}\right\rfloor$ classes of size 1 ($\omega(G)$ is the clique number of the garph $G$, that is, the largest order of a complete subgraph of $G$), then \[\psi(K(n,2))\leq \left\lfloor \frac{\tbinom{n}{2}-\left\lfloor \frac{n}{2}\right\rfloor}{2}+\left\lfloor \frac{n}{2}\right\rfloor\right\rfloor=\left\lfloor \frac{\tbinom{n}{2}+\left\lfloor \frac{n}{2}\right\rfloor}{2}\right\rfloor.\]
Such an upper bound is proved in \cite{MR3461960}.

To see the lower bound, we use a 1-factorization $F$ of $K_{2t}$ such that no component induced by two distinct 1-factors
of $F$ is a 4-cycle, see \cite{MR867658,MR1022464}. We need to distinguish four cases, namely, when $n=4k-1$, $4k$, $n=4k+1$ and $n=4k+2$ for $k\geq 1$. 
\begin{enumerate}
\item Case $n=4k$. Consider $F$ for $t=2k$. Since each 1-factor contains $t$ edges, we have $k$ color classes of size two for each 1-factor, therefore the lower bound follows.
\item Case $n=4k+1$. Consider $F$ for $t=2k+1$ and delete a vertex of $K_{4k+2}$. Since each maximal matching arising from a 1-factor of $F$ contains $t-1$ edges, we have $k$ color classes of size two for each such maximal matching, hence the lower bound follows.
\item Case $n=4k+2$. Consider $F$ for $t=2k$ and add two new vertices $a$ and $b$ to $V(K_{4k})$ to obtain $K_{4k+2}$. Color the subgraph $K_{4k}$ as above, and the remaining edges as follows. For each vertex $x$ of $K_{4k}$, we have the classes $\{ax,xb\}$. Finally, color the edge $ab$ in a greedy way and the result follows.
\item Case $n=4k-1$. Consider $F$ for $t=2k-1$ and adding a new vertex $b$ to obtain $K_{4k-1}$. Color the subgraph $K_{4k-3} =K_{4k-1}-\{a,b\}$ as in the case $n \equiv 1$ mod $4$, and form for each vertex $x$ of $K_{4k-3}$ the color class $\{ax, xb\}$. Finally, choose for the edge $ab$ greedily a color that is already used; the result then follows.
\end{enumerate}
Now, to verify that the upper bound is tight, consider an $(n+1,5,1)$-design $D$, see \cite{MR1742365}. Therefore $n+1\equiv 1,5$ mod $20$.  Choose a point $v$ of $D$ and let $C=\{Q_i\colon i = 0,1,\dots, \frac{n-4}{4}\}$ with $Q_i = \{v_{4i+1},v_{4i+2},v_{4i+3},v_{4i+4}\}$ be the set of $4$-blocks of $D-v$. Pairs of points of every $5$-block of $D-v$ are colored so that five color classes of size two are created, see Figure \ref{Fig7}; the coloring is not proper, since all those color classes induce a $K_2$ subgraph of $K(n, 2)$.
\begin{figure}[!htbp]
\begin{center}
\includegraphics[scale=0.8]{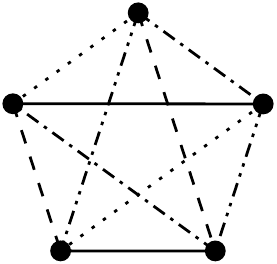}
\caption{\label{Fig7} A complete coloring of $K(5,2)$ using five colors that is not proper.}
\end{center}
\end{figure}

Label the edges of each block $Q_i$ of $C$ as $f_{2i} = v_{4i+1}v_{4i+2},$ $f_{2i+1} = v_{4i+3}v_{4i+4}$, $e_i$, $e_{n/4+i}$, $e_{n/2+i}$ and $e_{3n/4+i}$. Remaining vertices of $K(n,2)$ are colored to form color classes $\{f_i\}$ of size one for $i = 0,1,\dots,n/2-1$, and color classes $\{e_{2i},e_{2i+1}\}$ of size two for $i = 0, 1, .\dots , n/4-1$. The coloring is complete, hence the result follows.
\end{proof}

%%%%%%%%%%%%%%%%%%%%%%%%%%%%%%%%%%%%%%%%%%%%%%%%%%%%%%%%

\section{On the Grundy number of $K(n,2)$}\label{Section5}

In this section, we observe that the coloring used in Theorem \ref{teo1} is also a greedy coloring. 

An $l$-coloring of $G$ is called \emph{Grundy}, if it is a proper coloring having the property that for every two colors $i$ and $j$ with $i<j$, every vertex colored $j$ has a neighbor colored $i$ (consequently, every Grundy coloring is a complete coloring).
Moreover, a coloring $\varsigma$ of a graph $G$ is a Grundy coloring of $G$ if and only if $\varsigma$ is a greedy coloring of $G$, see \cite{MR2450569}. Therefore, the Grundy number  $\Gamma(G)$ is the largest $l$ for which a Grundy $l$-coloring of $G$ exists.  Any graph $G$ satisfies, $\chi(G)\leq \Gamma(G)\leq \alpha(G)\leq \psi(G)$.

Consider the coloring used in Theorem \ref{teo1}.  Divide colors into \emph{small}, \emph{medium} and \emph{high} (recall that colors used in Theorem \ref{teo1} are positive integers), and use them for color classes of size three, two and one, respectively. We only need to verify that if $i$ and $j$ are colors with $i< j$, then for every edge $e$ of color $j$ there exists an edge of color $i$ that is disjoint with $e$. This is certainly true if $j$ is a high color, since the coloring is complete. If the color $j$ is not high, the required condition is satisfied because of the following facts: ($i$) ($3$-element) vertex sets corresponding to color classes $i$ and $j$ have at most one vertex in common; ($ii$) the centers of involved $P_3$ subgraphs are distinct if both $i$ and $j$ are medium colors.

Consider the coloring used in Theorems \ref{teo1}. Taking the highest colors as the color class of size 1 and the smallest colors as the color classes of size 3. We only need to verify that for every two color classes with colors $i$ and $j$, $i<j$, and every edge of color $j$ there always exist a disjoint edge of color $i$. This is true if the color classes are triangles because they only share at most one vertex. If the color classes are an triangle $K_3$ with color $i$ and a path $P_3$ with color $j$ this is also true.

\begin{theorem} \label{teo3}
$\Gamma(K(n,2))=\left\lfloor \binom{n+1}{2}/3\right\rfloor$ for $n\not=3$ and $\Gamma(K(3,2))=1$.
\end{theorem}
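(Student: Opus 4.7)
The plan is to reduce Theorem~\ref{teo3} to Theorem~\ref{teo1} by checking that the complete proper coloring built there is, after re-indexing the colors, a Grundy coloring. Since every Grundy coloring is by definition proper and complete, one has the universal inequality $\Gamma(G)\le\alpha(G)$, and so the upper bound
\[
\Gamma(K(n,2))\le \left\lfloor\binom{n+1}{2}/3\right\rfloor
\]
follows immediately from Theorem~\ref{teo1}; the case $n=3$ is trivial since $K(3,2)$ is edgeless.

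For the matching lower bound I would take the proper complete coloring of $K(n,2)$ produced in the proof of Theorem~\ref{teo1}, whose color classes come in three shapes when read off on the underlying $K_{n}$: triangles (size~$3$), $P_{3}$'s with a designated center (size~$2$), and single edges (size~$1$). I would then relabel the colors so that every size-$3$ class receives a smaller label than every size-$2$ class, and every size-$2$ class receives a smaller label than every size-$1$ class. By the characterization of Grundy colorings as greedy colorings recorded in~\cite{MR2450569}, it suffices to verify the Grundy property: for every $i<j$ and every edge $e$ of $K_{n}$ colored $j$ there is an edge $f$ of color $i$ disjoint from $e$ (equivalently, adjacent to $e$ in $K(n,2)$).

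The verification splits by the types of the two classes involved. If $j$ is high (size~$1$), the completeness of the coloring already guaranteed by Theorem~\ref{teo1} supplies a disjoint $f$. If $i$ and $j$ are both small, the two triangles come from the Steiner-type system of the relevant case of Theorem~\ref{teo1} and therefore share at most one vertex, so at least two vertices of the triangle of color $i$ lie outside the two endpoints of $e$, and the edge between them works. For $i$ small and $j$ medium I would inspect each of Cases~1--4 of Theorem~\ref{teo1} to confirm that the triangle and the $P_{3}$ also share at most one vertex, after which the same counting supplies $f$. For $i$ and $j$ both medium I would exploit the key structural feature of the constructions that distinct medium color classes have distinct centers (together with the explicit form of the $P_{3}$'s used), and show that at least one of the two edges of the $P_{3}$ of color $i$ avoids both endpoints of $e$.

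The main obstacle is the medium-versus-medium subcase in Cases~2 and~4 of Theorem~\ref{teo1}, where many $P_{3}$ color classes are obtained by joining externally added vertices ($u,v$ in Case~2, and $a,b,c,d$ in Case~4) to points of a Steiner triple system; such $P_{3}$'s can overlap in more than one vertex, so a generic "small intersection" argument does not apply and the verification must use the precise positions of centers. I would organize it by tabulating, for each such case, the vertex support and the center of every $P_{3}$ class, and then running the edge-by-edge check; the expected conclusion is that no conflict arises because the added points never serve as centers for classes whose supports meet them again, while the Steiner-system points serve as centers only within the $K(5,2)$ or $K(7,2)$ sub-patterns, which are handled directly.
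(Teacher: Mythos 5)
Your proposal is correct and follows essentially the same route as the paper: reorder the colors of the Theorem~\ref{teo1} coloring so that triangles get the smallest colors, $P_3$ classes the medium ones, and singletons the highest, then verify the Grundy condition using completeness for high $j$, the at-most-one-common-vertex property of the triples for small classes, and the distinctness of the $P_3$ centers for the medium-versus-medium pairs. The overlap issue you flag for the externally added points in Cases~2 and~4 is exactly what the paper's condition on distinct centers is invoked to resolve, so no new idea is needed.
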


%%%%%%%%%%%%%%%%%%%%%%%%%%%%%%%%%%%%%%%%%%%%%%%%%%%%%%%%

\section{About general upper bounds}\label{Section6}

The known upper bound for the pseudoachromatic number states, for $K(n,k)$ (see \cite{MR2450569}), that
\begin{equation}\label{eq1}
\psi(K(n,k))\leq \frac{1}{2}+\sqrt{\frac{1}{4}+\binom{n}{k}\binom{n-k}{k}}=O\left(\frac{n^{k/2}(n-k)^{k/2}}{k!}\right).
\end{equation}

A slightly improved upper bound is the following. Let $\varsigma$ be a complete coloring of $K(n,k)$ using $l$ colors with $l=\psi(K(n,k))$. Let $x= \min\{ \left|\varsigma^{-1}(i)\right| : i\in \left[l\right]\}$, that is, $x$ is the cardinality of the smallest color class of $\varsigma$; without loss of generality we may suppose that $x= \left|\varsigma^{-1}(l)\right|$. Since $\varsigma$ defines a partition of the vertex set of $K(n, k)$ it follows that $l\leq f(x):=\binom{n}{k}/x$. 

Additionally, since $K(n,k)$ is $\binom{n-k}{k}$-regular, there are at most $\binom{n-k}{k}$ vertices adjacent in $K(n, k)$ to a vertex of $\varsigma^{-1}(l)$. With $\mathcal{X}:=\bigcup_{X\in \varsigma^{-1}(l)}X$ we have $|[n]\setminus \mathcal{X}| \geq n-kx$.  If $n-kx\geq k$ and $Y \subseteq [n]\setminus \mathcal{X}$,  $|Y|=k$, then each of $x$ edges $XY$,  $X\in \varsigma^{-1}(l)$, corresponds to the pair of colors $l$, $\varsigma (Y)$. Therefore, $\psi (K(n, k)) \leq g(x)$, where $g(x):=1+x\binom{n-k}{k}-(x-1)\binom{n-kx}{k}$, if $n-kx\geq k$, and $g(x):=1+x\binom{n-k}{k}$ otherwise. Consequently, we have:  \[\psi(K(n,k))\leq\min\{f(x),g(x)\}.\]
Hence, we conclude that:
\[\psi(K(n,k))\leq\left\lfloor \max\left\{ \min\{f(x),g(x)\}\colon x\in\mathbb{N}\right\}\right\rfloor\]
and then
\[\psi(K(n,k))\leq\left\lfloor \max\left\{ \min\{f(x),g(x)\}\colon x\in\mathbb{R}^+\right\}\right\rfloor.\]
It is not hard to see that $\max\left\{ \min\{f(x),g(x)\}\colon x\in\mathbb{R}^+\right\}=\frac{1}{2}+\sqrt{\frac{1}{4}+\binom{n}{k}\binom{n-k}{k}}$ if $n-kx< k$.

On a general lower bound. An $l$-coloring $\varsigma$ is called \emph{dominating} if every color class contains a vertex that has a neighbor in every other color class. The \emph{b-chromatic number $\varphi(G)$ of $G$} is defined as the largest number $l$ for which there exists a dominating $l$-coloring of $G$ (see \cite{MR1670155}). Since a dominating coloring is also complete, hence, for any graph $G$, $\varphi(G) \leq \alpha(G).$ The following theorem was proved in \cite{MR2564784}:
\begin{theorem}[Hajiabolhassan \cite{MR2564784}]
Let $k\geq3$ an integer. If $n\geq 2k$, then $2 \binom{ \left\lfloor \frac{n}{2} \right\rfloor}{k} \leq \varphi(K(n,k)).$
\end{theorem}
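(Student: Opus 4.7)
The plan is to exhibit a proper dominating coloring of $K(n,k)$ with exactly $l := 2\binom{\lfloor n/2\rfloor}{k}$ colors; the desired bound $\varphi(K(n,k)) \geq l$ will then follow immediately from the definition of $\varphi$. First I would split $[n]$ into two disjoint $\lfloor n/2 \rfloor$-subsets $A$ and $B$, setting aside a single leftover element if $n$ is odd, and take as colors $\{\alpha_X : X \in \binom{A}{k}\} \cup \{\beta_Y : Y \in \binom{B}{k}\}$, which has exactly $l$ elements. The Kneser-vertex $X$ itself will be the $b$-vertex of $\alpha_X$, and $Y$ will be the $b$-vertex of $\beta_Y$. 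The reason this split is useful is that any $X \subseteq A$ is disjoint from any $Y \subseteq B$, so every $\alpha$-type $b$-vertex is automatically adjacent in $K(n,k)$ to every $\beta$-type $b$-vertex, and the ``between-type'' domination conditions hold for free.

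The substantive task is to arrange, for each ordered pair of distinct $X, X' \in \binom{A}{k}$, a ``witness'' $k$-subset inside the color class $\alpha_{X'}$ that is disjoint from $X$ (and symmetrically for $\beta$). Fix pivots $b^* \in B$ and $a^* \in A$. When $X \cap X' \neq \emptyset$ I would place in $\alpha_{X'}$ a witness of the form $(X' \setminus X) \cup T$, with $T \subseteq B$ of size $k - |X' \setminus X|$ and $b^* \in T$: this vertex is disjoint from $X$ (hence adjacent to $X$ in $K(n,k)$), meets $X'$ in the nonempty set $X' \setminus X$ (ensuring properness with $X'$), and every two witnesses in a common class share $b^*$ (so each class is pairwise intersecting, i.e.\ an independent set of $K(n,k)$). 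When $X \cap X' = \emptyset$ no witness is needed, as $X'$ itself is already disjoint from $X$. The $\beta$-classes are treated symmetrically, with $a^*$ playing the role of $b^*$.

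The main obstacle I expect to focus on is the bookkeeping required to make the witness families (i) partition $V(K(n,k))$, (ii) be pairwise intersecting inside each color class, and (iii) actually provide the required witness for every ordered pair of same-type $b$-vertices. I would fix an auxiliary bijection $\sigma \colon A \to B$ with $\sigma(a^*) = b^*$ and define the $B$-part $T = T(X',X)$ through an explicit encoding of $X'$ (for instance, involving $\sigma((X' \cap X) \setminus \{a^*\})$ together with filler elements of $B$), chosen so that each witness uniquely determines the color class it belongs to. Any mixed $k$-subset not captured as a witness can then be absorbed into any class whose pivot it contains, which preserves independence and hence properness, while the domination property is built into the construction. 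The odd-$n$ case adds the extra subtlety of handling the single element of $[n]\setminus(A\cup B)$, which can be placed into any compatible class by the same principle.
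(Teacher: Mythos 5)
You have reconstructed the right overall architecture, and it is worth noting that the paper under review does not prove this theorem at all --- it imports it from Hajiabolhassan \cite{MR2564784} --- so the relevant comparison is with that source, whose proof is organized exactly as you describe at the top level: split $[n]$ into halves $A$ and $B$, index the $2\binom{\lfloor n/2\rfloor}{k}$ colors by $\binom{A}{k}\cup\binom{B}{k}$, observe that cross-type domination comes for free, and reduce the problem to building, for each $X'\in\binom{A}{k}$, a pairwise-intersecting family containing $X'$ that has a member disjoint from every $X\in\binom{A}{k}$ meeting $X'$. The gap is in your witness construction, which is precisely the hard part of the theorem. First, the arithmetic of your encoding is inconsistent: the witness $(X'\setminus X)\cup T$ forces $|T|=|X'\cap X|$, leaving no room for both the pivot $b^*$ and an injective copy $\sigma(X'\cap X)$ of the intersection unless $a^*\in X'\cap X$; and with the pivot constraint alone, $X'_1=\{1,4,5\}$ and $X'_2=\{2,4,5\}$ both demand the same vertex $\{4,5,b^*\}$ as their witness against $X=\{1,2,3\}$, so the classes do not partition the vertex set. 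Second, even dropping the pivot and taking the natural injective choice $T=\sigma(X'\cap X)$, the class of $X'$ fails to be independent: for $k=3$, $X'=\{1,2,3\}$, $X_1=\{1,2,4\}$, $X_2=\{3,5,6\}$ produce witnesses $\{3,\sigma(1),\sigma(2)\}$ and $\{1,2,\sigma(3)\}$, which are disjoint, so the coloring is not proper. Third, absorbing a leftover mixed $k$-set into ``any class whose pivot it contains'' does not make it intersect the b-vertex $X'\subseteq A$ of that class, so properness can fail there too. A telling symptom is that your construction nowhere uses the hypothesis $k\geq 3$.

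A workable witness family for the class of $X'$ is $\bigl\{\{a\}\cup\sigma(X'\setminus\{a\}) \colon a\in X'\bigr\}$ together with $X'$ itself: any two witnesses meet in $\sigma(X'\setminus\{a,a'\})$, which is nonempty exactly because $k\geq 3$; each witness meets $X'$ in $\{a\}$; each witness recovers $X'$ as $(W\cap A)\cup\sigma^{-1}(W\cap B)$, and it cannot be confused with a witness of a $\beta$-class because $|W\cap A|=1\neq k-1$ (again using $k\geq 3$); and for any $X\neq X'$ meeting $X'$ one picks $a\in X'\setminus X$ to obtain a member disjoint from $X$. Even then one must still argue that all remaining vertices of $K(n,k)$ (the other mixed $k$-sets, and those containing the leftover point when $n$ is odd) can be distributed among the existing classes without creating a monochromatic edge; this residual step is nontrivial and cannot be dismissed with a one-line absorption argument.
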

In consequence, for any $n$, $k$ satisfying $n\geq 2k\geq 6$, we have 
\[\alpha(K(n,k)) \geq 2 \binom{\left\lfloor \frac{n}{2} \right\rfloor}{k} = \Omega\left( \frac{n^k}{2^{k-1}k^k}\right).\]

%%%%%%%%%%%%%%%%%%%%%%%%%%%%%%%%%%%%%%%%%%%%%%%%%%%%%%%%

\section{The achromatic numbers of $D_V(n)$}\label{Section7}

Let $V$ be a set of $n$ points in general position in the plane, i.e., no three points of $V$ are collinear. The segment disjointness graph $D_V(n)$ has the vertex set equal to the set of all straight line segments with endpoints in $V$, and two segments are adjacent in $D_V(n)$ if and only if they are disjoint. Each graph $D_V(n)$ is a spanning subgraph of $K(n,2)$. The chromatic number of the graph $D_V(n)$ is bounded in \cite{MR2155418} where it is proved that $\chi(D_V(n))=\Theta(n).$

In this subsection, we prove bounds for $\alpha(D_V(n))$ and $\psi(D_V(n))$.  Having in mind the fact that $\psi(H) \leq \psi(G)$ if $H$ is a subgraph of $G$, Theorem \ref{teo2} yields
\[\psi(D_V(n))\leq \left\lfloor \frac{\binom{n}{2}+\left\lfloor\frac{n}{2}\right\rfloor}{2} \right\rfloor\leq \frac{n^2}{4}.\]

For the lower bound, we use the following results  A \emph{straight line thrackle} is a set $S$ of straight line segments such that any two distinct segments of $S$ either meet at a common endpoint or they cross each other (see \cite{MR15796}). 

\begin{theorem}[Erd{\H o}s \cite{MR15796} (see also the proof of Theorem 1 of \cite{MR2173933})]\label{teoErd}
If $d_1(n)$ denote the maximum number of edges of a straight line thrackle of $n$ vertices then $d_1(n)=n$.
\end{theorem}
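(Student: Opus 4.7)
The argument splits into the easy lower bound $d_1(n) \ge n$ and the substantive upper bound $d_1(n) \le n$. For the lower bound, it suffices to exhibit an $n$-vertex straight-line thrackle with $n$ edges: for odd $n = 2k+1$ the regular star-polygon drawing of the cycle $C_{2k+1}$ works (consecutive chords meet at a shared endpoint, non-consecutive chords cross), and for even $n$ one obtains an example by attaching a pendant edge at a vertex of such an odd-cycle construction.

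For the upper bound I would proceed by induction on $n$, with trivial base cases $n \le 3$. Given a straight-line thrackle $T$ on a set $V$ of $n$ points, the aim is to locate a vertex $v \in V$ whose deletion destroys at most one edge; then $|E(T)| - 1 \le d_1(n-1) \le n-1$ by the inductive hypothesis, yielding $|E(T)| \le n$. In other words, one wants either to find a vertex of degree at most $1$, or to conclude directly that $|E(T)| = n$ in the borderline situation.

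The key geometric input is that a straight-line thrackle cannot contain $C_4$ as a subgraph: if $abcda$ were a $4$-cycle, then the two non-adjacent pairs $\{ab, cd\}$ and $\{bc, da\}$ would both have to cross, which a short case analysis on the convex position of $\{a,b,c,d\}$ rules out (the four points are either in convex position, in which case opposite sides cannot cross, or one point lies inside the triangle spanned by the others, in which case the required crossings again fail). Combined with a convex-hull peeling argument — pick $v$ on the convex hull of $V$, observe that the edges at $v$ form a planar fan contained in a closed half-plane, and use the thrackle condition together with the absence of $C_4$ to bound the number of edges not incident to $v$ in terms of $n - \deg(v)$ — one obtains the needed inequality. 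This is the approach followed in the proof of Theorem~1 of \cite{MR2173933}.

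The main obstacle is the minimum-degree-$\ge 2$ case, since naive vertex deletion fails there (the triangle $K_3$ already shows that every vertex may have degree $2$ while $e = n = 3$). Overcoming it requires invoking the straight-line geometry in an essential way: it is precisely these rigid geometric restrictions, unavailable for general topological thrackles, that make Erd{\H o}s' bound provable, whereas Conway's general thrackle conjecture remains open.
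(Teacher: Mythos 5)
The paper does not prove this statement at all: it is imported as a known theorem of Erd\H{o}s, with the proof deferred to \cite{MR15796} and to Theorem~1 of \cite{MR2173933}. So there is no internal proof to compare yours against, and your proposal has to stand on its own. The lower bound and the auxiliary observations are fine: the star-polygon drawing of an odd cycle is a straight-line thrackle with $n$ edges, for even $n$ one can attach a pendant edge through the central crossing region of a thrackled $C_{n-1}$, and a straight-line thrackle indeed contains no $C_4$ (your convex-position case analysis is correct).

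The upper bound $d_1(n)\le n$ --- which is the only part of the theorem actually used later, in Lemma~\ref{triangles} --- is not proved by what you wrote. Two concrete gaps. First, the induction does not close: when every vertex has degree at least $2$ you cannot delete a vertex losing at most one edge, and to ``conclude directly that $|E(T)|=n$'' in that case you must show $T$ is $2$-regular, since minimum degree $\ge 2$ together with even one vertex of degree $\ge 3$ already forces $|E(T)|\ge n+1$; you name this obstacle but never overcome it. Second, $C_4$-freeness is too weak a geometric input: a graph with more edges than vertices in some component need not contain a $C_4$, but it does contain a theta subgraph, and among the three cycles of a theta, of lengths $p+q$, $q+r$, $p+r$, at least one is even. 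So the lemma you actually need is that a straight-line thrackle contains \emph{no even cycle of any length} (from which every component has at most one cycle, hence $e\le n$); alternatively one can run Perles's direct count: the edges at each vertex span an angle less than $\pi$, every edge is the extreme (say clockwise-first) edge at one of its endpoints, and each vertex supplies at most one such edge, giving $|E(T)|\le n$ with no induction at all. The ``convex-hull peeling'' step as you describe it (``bound the number of edges not incident to $v$ in terms of $n-\deg(v)$'') is a placeholder, not an argument; until one of these routes is carried out, the substantive half of the theorem is missing.
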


\begin{lemma}\label{triangles}
Any two triangles $T_1$ and $T_2$ with points in $V$, that share at most one point, contain two disjoint edges.
\end{lemma}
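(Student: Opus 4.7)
The plan is to choose a well-placed edge of $T_1$ and then invoke the elementary geometric fact that a line segment meets the boundary of a triangle in at most two points.

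First, I would select an edge $e$ of $T_1$ whose two endpoints both lie outside $V(T_2)$. If $V(T_1) \cap V(T_2) = \emptyset$, then any edge of $T_1$ qualifies; if the two triangles share a single vertex $v$, then $e$ is forced to be the unique edge of $T_1$ opposite to $v$. Either way, the chosen $e$ shares no endpoint with any edge of $T_2$, so the only obstruction to $e$ being disjoint from some edge of $T_2$ in $D_V(n)$ is that $e$ might cross, as a segment, every edge of $T_2$.

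Next, I would argue that this cannot happen: $e$ crosses at most two of the three edges of $T_2$. Indeed, the line supporting $e$ is determined by two points of $V$, and by the general position of $V$ it passes through no other point of $V$, in particular through no vertex of $T_2$. Hence this line meets the boundary of the convex triangle $T_2$ transversally in at most two points, so the segment $e$ itself meets at most two of the three sides of $T_2$. Consequently there is an edge $f$ of $T_2$ that $e$ neither crosses nor shares an endpoint with; by general position no endpoint of either segment can lie in the relative interior of the other, so $e$ and $f$ are disjoint as segments, producing the required pair.

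The main obstacle is really only the bookkeeping of degeneracies: ensuring that the supporting line of $e$ does not pass through a vertex of $T_2$ and that no endpoint of one segment lies on the other. Both are immediate from the no-three-collinear hypothesis built into the definition of $D_V(n)$, so the proof reduces to the transversality remark above and does not require any heavier input such as Theorem \ref{teoErd}.
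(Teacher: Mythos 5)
Your proof is correct, and it takes a genuinely more uniform and elementary route than the paper's. The paper splits into two cases: when $T_1$ and $T_2$ share a vertex it counts $5$ points carrying $6$ edges and invokes the thrackle bound $d_1(5)=5$ of Theorem \ref{teoErd} to conclude that some two edges must be disjoint (and any such pair necessarily has one edge in each triangle, since edges within a single triangle pairwise meet); when the triangles are vertex-disjoint it argues, much as you do, that a segment meeting all three sides of a triangle would have to pass through a vertex. Your argument replaces both cases by a single transversality observation: choose the edge $e$ of $T_1$ avoiding the (at most one) shared vertex, note that its supporting line misses every vertex of $T_2$ by general position and hence meets $\partial T_2$ in at most two points, so some side $f$ of $T_2$ is missed entirely, and general position rules out the remaining degenerate incidences. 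This buys self-containedness --- you never need the Erd\H{o}s thrackle theorem, which the paper cites anyway for other purposes --- at the cost of slightly more careful bookkeeping of degeneracies; the paper's Case 1 is a one-line consequence of a result it has already stated. One small remark: once you know $f$ does not meet the supporting line of $e$ at all, the final sentence about endpoints lying in relative interiors is already subsumed, since $f\cap e\subseteq f\cap\ell=\emptyset$.
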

\begin{proof}
Case 1. $T_1$ has a point in common with $T_2$: Since $T_1\cup T_2$ have five points and six edges, then two of its edges are disjoint due to $d_1(5)=5$.

Case 2. $T_1$ has no points in common with $T_2$: Let $e$ be an edge of $T_2$.  Let suppose that $T_1\cup T_2$ does not contain two disjoint edges, then $T_1$ and $e$ is a straight line thrackle.  Therefore, a vertex of $e$ and a vertex of $T_1$ have to be the same, which is impossible because $T_1$ has no points in common with $T_2$.
\end{proof}

Now, if we identify a Steiner triple system $STS(n)$ with the complete geometric graph of $n$ points and we color each triangle with a different color, by Lemma \ref{triangles}, we have the following.

\begin{lemma}
If $n\equiv 1,3$ mod $6$ and $V$ is a set of $n$ points in general position, then\[\frac{n^2}{6}-\frac{n}{6}= \frac{1}{3} \binom{n}{2}\leq \alpha(D_V(n))\]
\end{lemma}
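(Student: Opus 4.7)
The plan is to exhibit an explicit proper, complete coloring of $D_V(n)$ that uses exactly $\binom{n}{2}/3$ colors, via a Steiner triple system. Since $n\equiv 1,3\pmod 6$, an $STS(n)$ exists, and its blocks partition the edge set of $K_n$ into $\binom{n}{2}/3$ triangles; I can overlay this combinatorial structure onto the complete geometric graph on $V$ since both have the same edge set. I then define the coloring on the vertex set of $D_V(n)$ (i.e., on the segments with endpoints in $V$) by assigning to each segment the index of the unique block of the $STS(n)$ to which it belongs.

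To check properness, I note that the three edges of one block of the $STS(n)$ pairwise share a common vertex, so as segments they are not geometrically disjoint, hence they are non-adjacent in $D_V(n)$; no color class therefore contains an adjacent pair. To check completeness, I take two distinct blocks $T_1,T_2$ of the $STS(n)$; since any two distinct blocks of a Steiner triple system meet in at most one point, Lemma \ref{triangles} applies and gives two disjoint edges, one in $T_1$ and one in $T_2$. Those two segments are adjacent in $D_V(n)$, so every pair of colors is witnessed by an edge, and the coloring is complete.

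Since a proper complete coloring of $D_V(n)$ using $\binom{n}{2}/3$ colors exists, we get $\alpha(D_V(n))\geq \binom{n}{2}/3 = \frac{n^2-n}{6}$, which is the desired inequality.

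The only real content is Lemma \ref{triangles}, which is already established; the rest is bookkeeping. The main (small) obstacle I would flag is making sure the general-position hypothesis on $V$ is actually used — it enters implicitly through Lemma \ref{triangles}, which relies on $d_1(5)=5$ from Theorem \ref{teoErd}, and Erd\H{o}s' bound assumes no three points of $V$ are collinear. Beyond that, the argument is essentially a one-line combination of the existence of $STS(n)$ with Lemma \ref{triangles}, so I would present it very compactly.
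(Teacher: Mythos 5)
Your proof is correct and follows the same route as the paper: identify an $STS(n)$ with the complete geometric graph on $V$, make each block a color class, and invoke Lemma \ref{triangles} for completeness (the paper states this in a single sentence preceding the lemma, leaving the properness and completeness checks implicit, which you fill in correctly).
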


Therefore, we have the following theorem.

\begin{theorem}
For any natural number $n$ and any set of $n$ points $V$ in general position,
\[\frac{n^2}{6}-\Theta(n)\leq \alpha(D_V(n)).\]
\end{theorem}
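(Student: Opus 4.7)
The plan is to reduce the general case to the one handled by the preceding lemma. Given $n$, let $n'$ be the largest integer with $n' \leq n$ and $n' \equiv 1, 3 \pmod 6$; then $n - n' \leq 3$. Fix any $V' \subseteq V$ of size $n'$ and apply the preceding lemma: this yields a proper complete coloring $c_0$ of $D_{V'}(n')$ with $k := \binom{n'}{2}/3 = n^2/6 - \Theta(n)$ colors, where each color class consists of the three segments of a triangle of some $STS(n')$ identified with $V'$. It then suffices to extend $c_0$ to a proper complete coloring of $D_V(n)$ that uses at least $k - O(n)$ colors.

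The extension must assign colors to the $\binom{n}{2} - \binom{n'}{2} = O(n)$ segments having at least one endpoint in $V \setminus V'$. For each such leftover segment $e$, I would first try to absorb $e$ into an existing color class: if some triangle $T$ of the $STS(n')$ has all three of its edges non-disjoint from $e$ in $D_V(n)$ (i.e., each edge of $T$ shares a vertex with $e$ or crosses $e$ geometrically), then setting $c(e) = c_0(T)$ preserves properness, and completeness between this class and every other is inherited because the witness pair supplied by $c_0$ still lies in $D_{V'}(n') \subseteq D_V(n)$. Otherwise every triangle has an edge disjoint from $e$, so opening $\{e\}$ as a new singleton color class is automatically complete with every original color class.

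The main obstacle is that two newly opened singletons $\{e\}$ and $\{e'\}$ are complete with each other only when $e$ and $e'$ are disjoint segments. To restore completeness for a failing pair, I would merge the offending singletons into a single color class; this is still an independent set of $D_V(n)$, because $e$ and $e'$ must then be non-disjoint, and it costs exactly one color per merge. Since only $O(n)$ leftover segments are processed, at most $O(n)$ merges occur, so the resulting proper complete coloring of $D_V(n)$ uses at least $k - O(n) = n^2/6 - \Theta(n)$ colors, which yields the theorem.
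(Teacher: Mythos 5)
Your reduction to the case $n'\equiv 1,3\pmod 6$ is exactly what the paper intends (the paper offers no argument beyond ``Therefore,'' so your extension procedure supplies the missing details), and most of it checks out: the dichotomy ``absorb $e$ into a class whose triangle has all three edges meeting $e$, else open a singleton'' is the right one; completeness between old classes, between old and new classes, and the iterated merging of mutually non-disjoint leftover singletons are all sound; and the count $\binom{n'}{2}/3\geq\binom{n-3}{2}/3=\frac{n^2}{6}-\Theta(n)$ goes through.

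There is, however, one properness check you skipped, and it is the only place where the argument is genuinely incomplete: two different leftover segments $e$ and $e''$ may both be absorbed into the same triangle class $T=pqr$, and then the class $\{pq,qr,rp,e,e''\}$ is independent in $D_V(n)$ only if $e$ and $e''$ are themselves non-disjoint, which you never verify (your properness claim only compares $e$ against the three edges of $T$). The needed fact is true, but it requires a small geometric argument: a segment meeting all three sides of $T$ cannot cross all three (a line in general position meets the boundary of a convex region in at most two points, and no endpoint of a segment lies on a side of $T$ by general position), so it must emanate from a vertex of $T$ and cross the opposite side; two such segments either share that common vertex or contain cevians issued from distinct vertices of $T$, and two cevians from distinct vertices always cross in the interior of $T$. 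Note that you cannot sidestep this by refusing to absorb a second segment into $T$, since a segment meeting all three edges of $T$ cannot instead be made a singleton (its class would fail completeness against $T$'s class). With this observation added, your proof is complete and follows the paper's intended route.
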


Further, if $K_n$ has an even number of vertices, then there is a set $F\subseteq E(K_n)$ such that $E(K_n)\setminus F$ can be partitioned into triangles. More precisely, if $n\equiv 0, 2$ mod $6$,then $F$ is a perfect matching in $K_n$, and if $n\equiv 4$ mod $6$, then $F$ induces a spanning forest of $n/2 + 1$ edges in $K_n$ with all vertices having an odd degree, see \cite{MR2071903,MR0382030}. 

A set $V$ of $n$ points in \emph{convex position} is a set of $n$ points in general position such that they are the vertices of a convex polygon (each internal angle is strictly less than 180 degrees).

\begin{theorem}\label{teo7.4}
For any even natural number $n$ and any set of $n$ points $V$ in convex position,
\[\frac{n^2}{6}+\Theta(n)\leq \alpha(D_V(n))\]
\end{theorem}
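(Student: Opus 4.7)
The plan is to adapt the triangle-decomposition strategy used for general position by exploiting that $n$ is even. The preceding discussion provides a partition $E(K_n)=\mathcal{T}\cup F$ into edge-disjoint triangles and a small leave $F$: a perfect matching when $n\equiv 0,2$ mod $6$, or a spanning forest of $n/2+1$ edges with one degree-three vertex $v_{\ast}$ when $n\equiv 4$ mod $6$. Using each triangle of $\mathcal{T}$ as a color class of size three and each edge of $F$ as a singleton class (with the three claw edges at $v_{\ast}$ bundled into a single size-three class in the forest case) adds $\Theta(n)$ color classes on top of the $\approx\binom{n}{2}/3$ triangle classes, which is precisely the gain required.

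Convex position enters when choosing how to realise $F$ geometrically: for $n\equiv 0,2$ mod $6$ I would take the alternating hull sides $F=\{v_0v_1,v_2v_3,\ldots,v_{n-2}v_{n-1}\}$, and for $n\equiv 4$ mod $6$ I would place $v_{\ast}$ and its three leaves at four consecutive hull positions and match the remaining $n-4$ vertices by a non-crossing matching, so that all edges of $F$ are pairwise non-crossing segments. Every color class is a set of pairwise intersecting segments (triangles and claws share endpoints, singletons are trivially independent in $D_V(n)$), so the coloring is proper.

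Completeness splits into three kinds of pairs. Two triangle classes share at most one vertex, hence supply two disjoint edges by Lemma \ref{triangles}. Two singletons are immediately disjoint because $F$ is non-crossing. For a triangle $T$ and a singleton $\{e\}$ with $e=v_iv_{i+1}$ a hull side, $T$ meets $\{v_i,v_{i+1}\}$ in at most one vertex (otherwise $e\in T\subseteq E(K_n)\setminus F$, a contradiction), and the edge of $T$ avoiding the shared vertex has both endpoints outside $\{v_i,v_{i+1}\}$; such a chord cannot cross the hull side $e$, since two chords in convex position cross iff their endpoints alternate cyclically, and no hull vertex lies strictly between the consecutive $v_i$ and $v_{i+1}$. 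The claw-versus-triangle case (only when $n\equiv 4$ mod $6$) is analogous, using that the claw is supported on four consecutive hull vertices so the same non-alternation argument applies.

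Counting gives $(\binom{n}{2}-|F|)/3+|F|=\binom{n+1}{2}/3=n(n+1)/6$ color classes in the matching case and the same value up to $O(1)$ in the forest case, so $\alpha(D_V(n))\geq n^2/6+\Theta(n)$. The main obstacle will be the subcase $n\equiv 4$ mod $6$: realising the spanning forest as a non-crossing geometric configuration on four consecutive hull vertices while still obtaining a triangle decomposition of $K_n\setminus F$, together with the small additional case analysis needed for the claw-versus-triangle completeness check, require a little more care than the uniform treatment available when $F$ is a matching.
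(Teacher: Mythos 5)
Your proposal is correct and follows essentially the same route as the paper: decompose $E(K_n)$ into triangles plus the small leave $F$ (perfect matching, or claw plus matching for $n\equiv 4$ mod $6$) realized on the convex hull, take triangles as size-three classes and components of $F$ as the remaining classes, and invoke Lemma~\ref{triangles} together with the fact that hull edges cannot be crossed. The paper's own proof is only a few lines and leaves the completeness check implicit; your write-up supplies those details but introduces no new idea or different construction.
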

\begin{proof}
Take the edges of $F$ in the convex hull of $V$, except for one in the case of $n\equiv 4$ mod $6$, see Figure \ref{Fig8}. Each component of $F$ is a color class. Each triangle of $K_n-F$ is a color class. Essentially we use $\binom{n+1}{2}/3$, and the result follows.
\end{proof}

\begin{figure}[!htbp]
\begin{center}
\includegraphics[scale=0.8]{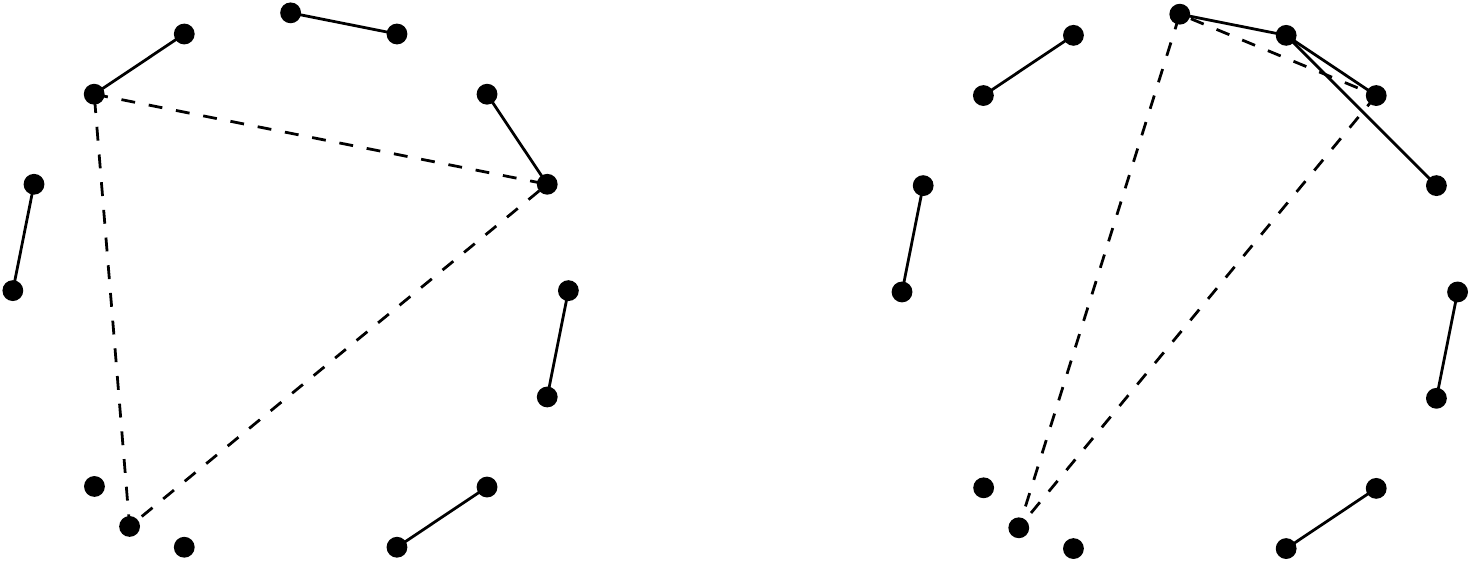}
\caption{\label{Fig8} Configurations of the color classes of size one arising from $F$ and a dashed triangle of $K_n-F$ in the proof of Theorem \ref{teo7.4}.}
\end{center}
\end{figure}

Finally, the geometric type Kneser graph $D_V(n,k)$ for $k\geq 2$ whose vertex set consists of all subsets of $k$ points in $V$. Two such sets $X$ and $Y$ are adjacent if and only if their convex hulls are disjoint. Given a point set $V$, for a line dividing $V$ into two sets $V_1$ and $V_2$ of $n/2$ points, having a coloring such that each color class has sets $X\subseteq V_1$ and $Y\subseteq V_2$,  we have that 
\[\psi(D_V(n,k)) \geq \binom{n/2}{k} = \Omega\left( \frac{n^k}{2^{k}k^k}\right)\]

%%%%%%%%%%%%%%%%%%%%%%%%%%%%%%%%%%%%%%%%%%%%%%%%%%%%%%%%

\section{On odd graphs}\label{Section8}

It is obvious to prove that the achromatic and the pseudoachromatic number as well of (the graph induced by) a matching of size $\binom{k}{2}$ is equal to $k$. Therefore, a matching of size $m$ has achromatic and pseudoachromatic number equal to  $\left\lfloor \frac{1}{2}+\sqrt{\frac{1}{4}+2m} \right\rfloor$, which means that in the case $n = 2k$ the upper bound of (1) for $\psi (K(2k,k))$ is equal to the lower bound for $\alpha (K(2k,k))$; in other words,
\[\alpha(K(2k,k))=\psi(K(2k,k))=\left\lfloor \frac{1}{2}+\sqrt{\frac{1}{4}+\binom{2k}{k}}\right\rfloor\]

However, the situation is different in the case of $K(2k+1,k)$, the Kneser graphs that are called \emph{odd graphs}. The better lower bound we have is
\[\Omega \left( 2^{k/2} \right)=\left\lfloor \frac{1}{2}+\sqrt{\frac{1}{4}+\binom{2k}{k}}\right\rfloor\leq\psi(K(2k+1,k)),\]

due to the fact that $K(2k,k)$ is a subgraph of the odd graph $K(2k+1,k)$.

%%%%%%%%%%%%%%%%%%%%%%%%%%%%%%%%%%%%%%%%%%%%%%%%%%%%%%%%

\section*{Acknowledgments}

The authors wish to thank the anonymous referees of this paper for their suggestions and remarks.

Part of the work was done during the I Taller de Matem{\' a}ticas Discretas, held at Campus-Juriquilla, Universidad Nacional Aut{\' o}noma de M{\' e}xico, Quer{\' e}taro City, Mexico on July 28-31, 2014. Part of the results of this paper was announced at Discrete Mathematics Days - JMDA16  in Barcelona, Spain on July 6-8, 2016, see \cite{rubio2016achromatic}.

Araujo-Pardo was partially supported by PAPIIT of Mexico grants IN107218, IN106318 and CONACyT of Mexico grant 282280.% Rubio-Montiel was partially supported by PAIDI of Mexico grant PAIDI/007/19 and PAPIIT of Mexico, grant IN107218.

%%%%%%%%%%%%%%%%%%%%%%%%%%%%%%%%%%%%%%%%%%%%%%%%%%%%%%%%

\end{document}